\theoremstyle{definition}
\newtheorem{remark}{Remark}[section]
\newtheorem{theorem}{Theorem}%[section]
\newtheorem{definition}{Definition}[section]
\newtheorem{prop}{Proposition}[section]
\newtheorem{example}{Example}[section]
\begin{document}
\title{An Abstract Category  of Dynamical Systems} 
\author{James Schmidt\footnote{Department of Applied Mathematics and Statistics, \href{aschmi40@jhu.edu}{aschmi40@jhu.edu}}\\
\small{Johns Hopkins University Applied Physics Laboratory}\\
\small Johns Hopkins University}
\date{\today}
\maketitle
\newcommand{\nat}[6][large]{%
  \begin{tikzcd}[ampersand replacement = \&, column sep=#1]
    #2\ar[bend left=40,""{name=U}]{r}{#4}\ar[bend right=40,',""{name=D}]{r}{#5}\& #3
          \ar[shorten <=10pt,shorten >=10pt,Rightarrow,from=U,to=D]{d}{~#6}
    \end{tikzcd}
}
\newcommand{\invamalg}{\mathbin{\text{\rotatebox[origin=c]{180}{$\amalg$}}}}

\newcommand{\dCrl}[0]{\mathfrak{dCrl}}
\newcommand{\ytil}{\tilde{y}}
\newcommand{\defeq}{\vcentcolon=}
%Basics
\newcommand{\dee}{\partial}
\newcommand{\lb}{\{}
\newcommand{\rb}{\}}
\newcommand{\R}{\mathbb{R}}
\newcommand{\C}{\mathbb{C}}
\newcommand{\Q}{\mathbb{Q}}
\newcommand{\N}{\mathbb{N}}
\newcommand{\el}{\mathcal{L}}
\newcommand{\pdiv}[2]{\frac{\partial{#1}}{\partial{#2}}}
\newcommand{\discatp}{\displaystyle\bigsqcap}
\newcommand{\discats}{\displaystyle\bigsqcup}

%Sheafs
\newcommand{\uZ}{\underline{\mathbb{Z}}}
\newcommand{\uF}[1]{\underline{\mathbb{F}}}
\newcommand{\one}{\mathbb{1}}
\newcommand{\two}{\mathbb{2}}
%\def\one{\mbox{1\hspace{-4.25pt}\fontsize{12}{14.4}\selectfont\textrm{1}}}
%\def\two{{\mbox{2\hspace{-4.25pt}\fontsize{12}{14.4}\selectfont\textrm{2}}}}

%Displaystyle Stuff
\newcommand{\dis}{\displaystyle}
\newcommand{\disp}{\displaystyle\prod}
\newcommand{\disu}{\displaystyle\bigcup}
\newcommand{\disi}{\displaystyle\bigcap}
\newcommand{\diss}{\displaystyle\sum}
\newcommand{\disg}{\displaystyle\int}
\newcommand{\disl}{\displaystyle\lim}
\newcommand{\dislim}{\displaystyle\lim}
\newcommand{\disliminf}{\displaystyle\liminf}
\newcommand{\dislimsup}{\displaystyle\limsup}
\newcommand{\disbop}{\displaystyle\bigotimes}
\newcommand{\disbos}{\displaystyle\bigoplus}
\newcommand{\dissup}{\displaystyle\sup}
\newcommand{\disinf}{\displaystyle\inf}
\newcommand{\dismax}{\displaystyle\max}
\newcommand{\dismin}{\displaystyle\min}
\newcommand{\dirlim}[1]{\displaystyle\varinjlim_{#1}}
\newcommand{\indlim}[1]{\displaystyle\varprojlim_{#1}}
\newcommand{\discatlim}{\indlim}
\newcommand{\discatcolim}{\dirlim}
\newcommand{\catcolim}{\mbox{colim}}
\newcommand{\catlim}{\mbox{lim}}

\newcommand{\colgray}[1]{\color{gray}{#1}\color{black}}

\newcommand{\sfM}{\sF{M}}

\newcommand{\forget}[2]{\Ub^{#1}_{#2}}

%Arrows/Category stuff
\newcommand\righttwoarrow{%
        \mathrel{\vcenter{\mathsurround0pt
                \ialign{##\crcr
                        \noalign{\nointerlineskip}$\rightarrow$\crcr
                        \noalign{\nointerlineskip}$\rightarrow$\crcr
                      %  \noalign{\nointerlineskip}$\rightarrow$\crcr
                }%
        }}%
}

%Math BB
\newcommand{\Z}{\mathbb{Z}}
\newcommand{\Ab}[0]{\mathbb{A}}
\newcommand{\Bb}[0]{\mathbb{B}}
\newcommand{\Cb}[0]{\mathbb{C}}
\newcommand{\Db}[0]{\mathbb{D}}
\newcommand{\Eb}[0]{\mathbb{E}}
\newcommand{\Fb}[0]{\mathbb{F}}
\newcommand{\Gb}[0]{\mathbb{G}}
\newcommand{\Hb}[0]{\mathbb{H}}
\newcommand{\Ib}[0]{\mathbb{I}}
\newcommand{\Jb}[0]{\mathbb{J}}
\newcommand{\Kb}[0]{\mathbb{K}}
\newcommand{\Lb}[0]{\mathbb{L}}
\newcommand{\Mb}[0]{\mathbb{M}}
\newcommand{\Nb}[0]{\mathbb{N}}
\newcommand{\Ob}[0]{\mathbb{O}}
\newcommand{\Pb}[0]{\mathbb{P}}
\newcommand{\Qb}[0]{\mathbb{Q}}
\newcommand{\Rb}[0]{\mathbb{R}}
\newcommand{\Sb}[0]{\mathbb{S}}
\newcommand{\Tb}[0]{\mathbb{T}}
\newcommand{\Ub}[0]{\mathbb{U}}
\newcommand{\Vb}[0]{\mathbb{V}}
\newcommand{\Wb}[0]{\mathbb{W}}
\newcommand{\Xb}[0]{\mathbb{X}}
\newcommand{\Yb}[0]{\mathcal{Y}}
\newcommand{\Zb}[0]{\mathbb{Z}}

%% Math SF
\newcommand{\sF}[1]{\mathsf{#1}}

%% Math Script
\newcommand{\sC}[1]{\mathscr{#1}}

%% Math Cal
\newcommand{\mC}[1]{\mathcal{#1}}

%% Math BB
\newcommand{\mB}[1]{\mathbb{#1}}

%%Math Frak
\newcommand{\mF}[1]{\mathfrak{#1}}

%% Math Cal 
%\newcommand{\Ac}[0]{\mathcal{A}}
\newcommand{\Bc}[0]{\mathcal{B}}
\newcommand{\Cc}[0]{\mathcal{C}}
\newcommand{\Dc}[0]{\mathcal{D}}
\newcommand{\Ec}[0]{\mathcal{E}}
\newcommand{\Fc}[0]{\mathcal{F}}
\newcommand{\Gc}[0]{\mathcal{G}}
\newcommand{\Hc}[0]{\mathcal{H}}
\newcommand{\Ic}[0]{\mathcal{I}}
\newcommand{\Jc}[0]{\mathcal{J}}
\newcommand{\Kc}[0]{\mathcal{K}}
\newcommand{\Lc}[0]{\mathcal{L}}
\newcommand{\Mc}[0]{\mathcal{M}}
\newcommand{\Nc}[0]{\mathcal{N}}
\newcommand{\Oc}[0]{\mathcal{O}}
\newcommand{\Pc}[0]{\mathcal{P}}
\newcommand{\Qc}[0]{\mathcal{Q}}
\newcommand{\Rc}[0]{\mathcal{R}}
\newcommand{\Sc}[0]{\mathcal{S}}
\newcommand{\Tc}[0]{\mathcal{T}}
\newcommand{\Uc}[0]{\mathcal{U}}
\newcommand{\Vc}[0]{\mathcal{V}}
\newcommand{\Wc}[0]{\mathcal{W}}
\newcommand{\Xc}[0]{\mathcal{X}}
\newcommand{\Yc}[0]{\mathcal{Y}}
\newcommand{\Zc}[0]{\mathcal{Z}}

\newcommand{\aca}[0]{\mathcal{a}}
\newcommand{\bca}[0]{\mathcal{b}}
\newcommand{\cca}[0]{\mathcal{c}}
\newcommand{\dca}[0]{\mathcal{d}}
\newcommand{\eca}[0]{\mathcal{e}}
\newcommand{\fca}[0]{\mathcal{f}}
\newcommand{\gca}[0]{\mathcal{g}}
\newcommand{\hca}[0]{\mathcal{h}}
\newcommand{\ica}[0]{\mathcal{i}}
\newcommand{\jca}[0]{\mathcal{j}}
\newcommand{\kca}[0]{\mathcal{k}}
\newcommand{\lca}[0]{\mathcal{l}}
\newcommand{\mca}[0]{\mathcal{m}}
\newcommand{\nca}[0]{\mathcal{n}}
\newcommand{\oca}[0]{\mathcal{o}}
\newcommand{\pca}[0]{\mathcal{p}}
\newcommand{\qca}[0]{\mathcal{q}}
\newcommand{\rca}[0]{\mathcal{r}}
\newcommand{\sca}[0]{\mathcal{s}}
\newcommand{\tca}[0]{\mathcal{t}}
\newcommand{\uca}[0]{\mathcal{u}}
\newcommand{\vca}[0]{\mathcal{v}}
\newcommand{\wca}[0]{\mathcal{w}}
\newcommand{\xca}[0]{\mathcal{x}}
\newcommand{\yca}[0]{\mathcal{y}}
\newcommand{\zca}[0]{\mathcal{z}}

\newcommand{\Af}[0]{\mathfrak{A}}
\newcommand{\Bf}[0]{\mathfrak{B}}
\newcommand{\Cf}[0]{\mathfrak{C}}
\newcommand{\Df}[0]{\mathfrak{D}}
\newcommand{\Ef}[0]{\mathfrak{E}}
\newcommand{\Ff}[0]{\mathfrak{F}}
\newcommand{\Gf}[0]{\mathfrak{G}}
\newcommand{\Hf}[0]{\mathfrak{H}}
\newcommand{\If}[0]{\mathfrak{I}}
\newcommand{\Jf}[0]{\mathfrak{J}}
\newcommand{\Kf}[0]{\mathfrak{K}}
\newcommand{\Lf}[0]{\mathfrak{L}}
\newcommand{\Mf}[0]{\mathfrak{M}}
\newcommand{\Nf}[0]{\mathfrak{N}}
\newcommand{\Of}[0]{\mathfrak{O}}
\newcommand{\Pf}[0]{\mathfrak{P}}
\newcommand{\Qf}[0]{\mathfrak{Q}}
\newcommand{\Rf}[0]{\mathfrak{R}}
\newcommand{\Sf}[0]{\mathfrak{S}}
\newcommand{\Tf}[0]{\mathfrak{T}}
\newcommand{\Uf}[0]{\mathfrak{U}}
\newcommand{\Vf}[0]{\mathfrak{V}}
\newcommand{\Wf}[0]{\mathfrak{W}}
\newcommand{\Xf}[0]{\mathfrak{X}}
\newcommand{\Yf}[0]{\mathfrak{Y}}
\newcommand{\Zf}[0]{\mathfrak{Z}}

\newcommand{\af}[0]{\mathfrak{a}}
\newcommand{\bff}[0]{\mathfrak{b}}
\newcommand{\cf}[0]{\mathfrak{c}}
\newcommand{\dff}[0]{\mathfrak{d}}
\newcommand{\ef}[0]{\mathfrak{e}}
\newcommand{\ff}[0]{\mathfrak{f}}
\newcommand{\gf}[0]{\mathfrak{g}}
\newcommand{\hf}[0]{\mathfrak{h}}
\newcommand{\ifrak}{\mathfrak{i}}
\newcommand{\jf}[0]{\mathfrak{j}}
\newcommand{\kf}[0]{\mathfrak{k}}
\newcommand{\lf}[0]{\mathfrak{l}}
\newcommand{\mf}[0]{\mathfrak{m}}
\newcommand{\nf}[0]{\mathfrak{n}}
\newcommand{\of}[0]{\mathfrak{o}}
\newcommand{\pf}[0]{\mathfrak{p}}
\newcommand{\qf}[0]{\mathfrak{q}}
\newcommand{\rf}[0]{\mathfrak{r}}
\renewcommand{\sf}[0]{\mathfrak{s}}
\newcommand{\tf}[0]{\mathfrak{t}}
\newcommand{\uf}[0]{\mathfrak{u}}
\newcommand{\vf}[0]{\mathfrak{v}}
\newcommand{\wf}[0]{\mathfrak{w}}
\newcommand{\xf}[0]{\mathfrak{x}}
\newcommand{\yf}[0]{\mathfrak{y}}
\newcommand{\zf}[0]{\mathfrak{z}}
\newcommand{\cdX}{\mathfrak{cdX}}
\newcommand{\cdCrl}{\mathfrak{cdCrl}}

\newcommand{\scA}[0]{\mathscr{A}}
\newcommand{\scB}[0]{\mathscr{B}}
\newcommand{\scC}[0]{\mathscr{C}}
\newcommand{\scD}[0]{\mathscr{D}}
\newcommand{\scE}[0]{\mathscr{E}}
\newcommand{\scF}[0]{\mathscr{F}}
\newcommand{\scG}[0]{\mathscr{G}}
\newcommand{\scH}[0]{\mathscr{H}}
\newcommand{\scI}[0]{\mathscr{I}}
\newcommand{\scJ}[0]{\mathscr{J}}
\newcommand{\scK}[0]{\mathscr{K}}
\newcommand{\scL}[0]{\mathscr{L}}
\newcommand{\scM}[0]{\mathscr{M}}
\newcommand{\scN}[0]{\mathscr{N}}
\newcommand{\scO}[0]{\mathscr{O}}
\newcommand{\scP}[0]{\mathscr{P}}
\newcommand{\scQ}[0]{\mathscr{Q}}
\newcommand{\scR}[0]{\mathscr{R}}
\newcommand{\scS}[0]{\mathscr{S}}
\newcommand{\scT}[0]{\mathscr{T}}
\newcommand{\scU}[0]{\mathscr{U}}
\newcommand{\scV}[0]{\mathscr{V}}
\newcommand{\scW}[0]{\mathscr{W}}
\newcommand{\scX}[0]{\mathscr{X}}
\newcommand{\scY}[0]{\mathscr{Y}}
\newcommand{\scZ}[0]{\mathscr{Z}}

\newcommand{\fA}[0]{\mathsf{A}}
\newcommand{\fB}[0]{\mathsf{B}}
\newcommand{\fC}[0]{\mathsf{C}}
\newcommand{\fD}[0]{\mathsf{D}}
\newcommand{\fE}[0]{\mathsf{E}}
\newcommand{\fG}[0]{\mathsf{G}}
\newcommand{\fH}[0]{\mathsf{H}}
\newcommand{\fI}[0]{\mathsf{I}}
\newcommand{\fJ}[0]{\mathsf{J}}
\newcommand{\fK}[0]{\mathsf{K}}
\newcommand{\fL}[0]{\mathsf{L}}
\newcommand{\fM}[0]{\mathsf{M}}
\newcommand{\fN}[0]{\mathsf{N}}
\newcommand{\fO}[0]{\mathsf{O}}
\newcommand{\fP}[0]{\mathsf{P}}
\newcommand{\fQ}[0]{\mathsf{Q}}
\newcommand{\fR}[0]{\mathsf{R}}
\newcommand{\fS}[0]{\mathsf{S}}
\newcommand{\fT}[0]{\mathsf{T}}
\newcommand{\fU}[0]{\mathsf{U}}
\newcommand{\fV}[0]{\mathsf{V}}
\newcommand{\fW}[0]{\mathsf{W}}
\newcommand{\fX}[0]{\mathsf{X}}
\newcommand{\fY}[0]{\mathsf{Y}}
\newcommand{\fZ}[0]{\mathsf{Z}}

\newcommand{\fa }[0]{\mathsf{a}}
\newcommand{\fb }[0]{\mathsf{b}}
\newcommand{\fc }[0]{\mathsf{c}}
\newcommand{\fd}[0]{\mathsf{d}}
\newcommand{\fe}[0]{\mathsf{e}}
\newcommand{\fg}[0]{\mathsf{g}}
\newcommand{\fh}[0]{\mathsf{h}}
\newcommand{\fj}[0]{\mathsf{j}}
\newcommand{\fk}[0]{\mathsf{k}}
\newcommand{\fl }[0]{\mathsf{l}}
\newcommand{\fm }[0]{\mathsf{m}}
\newcommand{\fn }[0]{\mathsf{n}}
\newcommand{\fo }[0]{\mathsf{o}}
\newcommand{\fp}[0]{\mathsf{p}}
\newcommand{\fq}[0]{\mathsf{q}}
\newcommand{\fr}[0]{\mathsf{r}}
\newcommand{\fs}[0]{\mathsf{s}}
\newcommand{\ft }[0]{\mathsf{t}}
\newcommand{\fu }[0]{\mathsf{u}}
\newcommand{\fv }[0]{\mathsf{v}}
\newcommand{\fw}[0]{\mathsf{w}}
\newcommand{\fx}[0]{\mathsf{x}}
\newcommand{\fy}[0]{\mathsf{y}}
\newcommand{\fz}[0]{\mathsf{z}}

\newcommand{\dA}[0]{\dot{A}}
\newcommand{\dB}[0]{\dot{B}}
\newcommand{\dC}[0]{\dot{C}}
\newcommand{\dD}[0]{\dot{D}}
\newcommand{\dE}[0]{\dot{E}}
\newcommand{\dF}[0]{\dot{F}}
\newcommand{\dG}[0]{\dot{G}}
\newcommand{\dH}[0]{\dot{H}}
\newcommand{\dI}[0]{\dot{I}}
\newcommand{\dJ}[0]{\dot{J}}
\newcommand{\dK}[0]{\dot{K}}
\newcommand{\dL}[0]{\dot{L}}
\newcommand{\dM}[0]{\dot{M}}
\newcommand{\dN}[0]{\dot{N}}
\newcommand{\dO}[0]{\dot{O}}
\newcommand{\dP}[0]{\dot{P}}
\newcommand{\dQ}[0]{\dot{Q}}
\newcommand{\dR}[0]{\dot{R}}
\newcommand{\dS}[0]{\dot{S}}
\newcommand{\dT}[0]{\dot{T}}
\newcommand{\dU}[0]{\dot{U}}
\newcommand{\dV}[0]{\dot{V}}
\newcommand{\dW}[0]{\dot{W}}
\newcommand{\dX}[0]{\dot{X}}
\newcommand{\dY}[0]{\dot{Y}}
\newcommand{\dZ}[0]{\dot{Z}}

\newcommand{\da}[0]{\dot{a}}
\newcommand{\db}[0]{\dot{b}}
\newcommand{\dc}[0]{\dot{c}}
\newcommand{\dd}[0]{\dot{d}}
\newcommand{\de}[0]{\dot{e}}
\newcommand{\df}[0]{\dot{f}}
\newcommand{\dg}[0]{\dot{g}}
\renewcommand{\dh}[0]{\dot{h}}
\newcommand{\di}[0]{\dot{i}}
\renewcommand{\dj}[0]{\dot{j}}
\newcommand{\dk}[0]{\dot{k}}
\newcommand{\dl}[0]{\dot{l}}
\newcommand{\dm}[0]{\dot{m}}
\newcommand{\dn}[0]{\dot{n}}
\newcommand{\dq}[0]{\dot{q}}
\newcommand{\dr}[0]{\dot{r}}
\newcommand{\ds}[0]{\dot{s}}
\newcommand{\dt}[0]{\dot{t}}
\newcommand{\du}[0]{\dot{u}}
\newcommand{\dv}[0]{\dot{v}}
\newcommand{\dw}[0]{\dot{w}}
\newcommand{\dx}[0]{\dot{x}}
\newcommand{\dy}[0]{\dot{y}}
\newcommand{\dz}[0]{\dot{z}}

\newcommand{\oA}[0]{\overline{A}}
\newcommand{\oB}[0]{\overline{B}}
\newcommand{\oC}[0]{\overline{C}}
\newcommand{\oD}[0]{\overline{D}}
\newcommand{\oE}[0]{\overline{E}}
\newcommand{\oF}[0]{\overline{F}}
\newcommand{\oG}[0]{\overline{G}}
\newcommand{\oH}[0]{\overline{H}}
\newcommand{\oI}[0]{\overline{I}}
\newcommand{\oJ}[0]{\overline{J}}
\newcommand{\oK}[0]{\overline{K}}
\newcommand{\oL}[0]{\overline{L}}
\newcommand{\oM}[0]{\overline{M}}
\newcommand{\oN}[0]{\overline{N}}
\newcommand{\oO}[0]{\overline{O}}
\newcommand{\oP}[0]{\overline{P}}
\newcommand{\oQ}[0]{\overline{Q}}
\newcommand{\oR}[0]{\overline{R}}
\newcommand{\oS}[0]{\overline{S}}
\newcommand{\oT}[0]{\overline{T}}
\newcommand{\oU}[0]{\overline{U}}
\newcommand{\oV}[0]{\overline{V}}
\newcommand{\oW}[0]{\overline{W}}
\newcommand{\oX}[0]{\overline{X}}
\newcommand{\oY}[0]{\overline{Y}}
\newcommand{\oZ}[0]{\overline{Z}}

\newcommand{\oa}[0]{\overline{a}}
\newcommand{\ob}[0]{\overline{b}}
\newcommand{\oc}[0]{\overline{c}}
\newcommand{\od}[0]{\overline{d}}
\renewcommand{\oe}[0]{\overline{e}}
\newcommand{\og}[0]{\overline{g}}
\newcommand{\oh}[0]{\overline{h}}
\newcommand{\oi}[0]{\overline{i}}
\newcommand{\oj}[0]{\overline{j}}
\newcommand{\ok}[0]{\overline{k}}
\newcommand{\ol}[0]{\overline{l}}
\newcommand{\om}[0]{\overline{m}}
\newcommand{\on}[0]{\overline{n}}
\newcommand{\oo}[0]{\overline{o}}
\newcommand{\op}[0]{\overline{p}}
\newcommand{\oq}[0]{\overline{q}}
\newcommand{\os}[0]{\overline{s}}
\newcommand{\ot}[0]{\overline{t}}
\newcommand{\ou}[0]{\overline{u}}
\newcommand{\ov}[0]{\overline{v}}
\newcommand{\ow}[0]{\overline{w}}
\newcommand{\ox}[0]{\overline{x}}
\newcommand{\oy}[0]{\overline{y}}
\newcommand{\oz}[0]{\overline{z}}

\renewcommand{\a}{\alpha}
\renewcommand{\b}{\beta}
\renewcommand{\d}{\delta}
\newcommand{\e}{\varepsilon}
\newcommand{\f}{\phi}
\newcommand{\g}{\gamma}
\newcommand{\h}{\eta}
\renewcommand{\i}{\iota}
\renewcommand{\k}{\kappa}
\renewcommand{\l}{\lambda}
\newcommand{\m}{\mu}
\newcommand{\n}{\nu}
\newcommand{\p}{\pi}
\newcommand{\ph}{\varphi}
\newcommand{\ps}{\psi}
\newcommand{\q}{\xi}
\renewcommand{\r}{\rho}
\newcommand{\s}{\sigma}
\renewcommand{\t}{\tau}
\renewcommand{\v}{\upsilon}
\newcommand{\x}{\chi}
\newcommand{\z}{\zeta}
\newcommand{\G}{\Gamma}

\newcommand{\aarb}[0]{\<a>}
\newcommand{\barb}[0]{\<b>}
\newcommand{\carb}[0]{\<c>}
\newcommand{\darb}[0]{\<d>}
\newcommand{\earb}[0]{\<e>}
\newcommand{\farb}[0]{\<f>}
\newcommand{\garb}[0]{\<g>}
\newcommand{\harb}[0]{\<h>}
\newcommand{\iarb}[0]{\<i>}
\newcommand{\jarb}[0]{\<j>}
\newcommand{\karb}[0]{\<k>}
\newcommand{\larb}[0]{\<l>}
\newcommand{\marb}[0]{\<m>}
\newcommand{\narb}[0]{\<n>}
\newcommand{\oarb}[0]{\<o>}
\newcommand{\parb}[0]{\<p>}
\newcommand{\qarb}[0]{\<q>}
\newcommand{\rarb}[0]{\<r>}
\newcommand{\sarb}[0]{\<s>}
\newcommand{\tarb}[0]{\<t>}
\newcommand{\uarb}[0]{\<u>}
\newcommand{\varb}[0]{\<v>}
\newcommand{\warb}[0]{\<w>}
\newcommand{\xarb}[0]{\<x>}
\newcommand{\yarb}[0]{\<y>}
\newcommand{\zarb}[0]{\<z>}

\newcommand{\hA}[0]{\hat{A}}
\newcommand{\hB}[0]{\hat{B}}
\newcommand{\hC}[0]{\hat{C}}
\newcommand{\hD}[0]{\hat{D}}
\newcommand{\hE}[0]{\hat{E}}
\newcommand{\hF}[0]{\hat{F}}
\newcommand{\hG}[0]{\hat{G}}
\newcommand{\hH}[0]{\hat{H}}
\newcommand{\hI}[0]{\hat{I}}
\newcommand{\hJ}[0]{\hat{J}}
\newcommand{\hK}[0]{\hat{K}}
\newcommand{\hL}[0]{\hat{L}}
\newcommand{\hM}[0]{\hat{M}}
\newcommand{\hN}[0]{\hat{N}}
\newcommand{\hO}[0]{\hat{O}}
\newcommand{\hP}[0]{\hat{P}}
\newcommand{\hQ}[0]{\hat{Q}}
\newcommand{\hR}[0]{\hat{R}}
\newcommand{\hS}[0]{\hat{S}}
\newcommand{\hT}[0]{\hat{T}}
\newcommand{\hU}[0]{\hat{U}}
\newcommand{\hV}[0]{\hat{V}}
\newcommand{\hW}[0]{\hat{W}}
\newcommand{\hX}[0]{\hat{X}}
\newcommand{\hY}[0]{\hat{Y}}
\newcommand{\hZ}[0]{\hat{Z}}

\newcommand{\ha}[0]{\hat{a}}
\newcommand{\hb}[0]{\hat{b}}
\newcommand{\hc}[0]{\hat{c}}
\newcommand{\hd}[0]{\hat{d}}
\newcommand{\he}[0]{\hat{e}}
\newcommand{\hg}[0]{\hat{g}}
\newcommand{\hh}[0]{\hat{h}}
\newcommand{\hi}[0]{\hat{i}}
\newcommand{\hj}[0]{\hat{j}}
\newcommand{\hk}[0]{\hat{k}}
\newcommand{\hl}[0]{\hat{l}}
\newcommand{\hm}[0]{\hat{m}}
\newcommand{\hn}[0]{\hat{n}}
\newcommand{\ho}[0]{\hat{o}}
\newcommand{\hp}[0]{\hat{p}}
\newcommand{\hq}[0]{\hat{q}}
\newcommand{\hr}[0]{\hat{r}}
\newcommand{\hs}[0]{\hat{s}}
\newcommand{\hu}[0]{\hat{u}}
\newcommand{\hv}[0]{\hat{v}}
\newcommand{\hw}[0]{\hat{w}}
\newcommand{\hx}[0]{\hat{x}}
\newcommand{\hy}[0]{\hat{y}}
\newcommand{\hz}[0]{\hat{z}}

%Hybrid System Stuff
\newcommand{\hyph}[2]{\Fc_{#1}:#2\rightarrow \sF{RelMan^c}}
\newcommand{\repsys}{(\Fc_{N^\scT}:\N\rightarrow\sF{RelMan^c},\frac{d}{dt})}
\newcommand{\truerep}[1]{\left(\Fc_{N^\scT}{#1}:\N\rightarrow\sF{RelMan^c},\frac{D}{Dt}\right)}

\abstract{A notion of time is fundamental in the study of dynamical systems. Time arises as a standalone dynamical system and also in solutions or trajectories as a special kind of \textit{map} between systems.  We characterize time by a universal property and use universality to motivate an abstract definition for categories of dynamical systems. We propose this definition as guidance in concrete instantiations for other kinds of systems.}

\section{Introduction}

Dynamical systems exhibit behavior and are often described in terms of the laws which govern behavior. In continuous-time systems, ordinary differential equations (odes) represent such laws and their solutions realize them as behaviors, while in discrete-time systems endomaps both specify the law of which iterates are its realization. Emphasis on behavior is due to Willems \cite{willems}.  Common to various, if not all, notions of dynamical systems is the presence of time according to which behaviors evolve. We suggest that time is a defining feature of dynamical systems and characterize time abstractly by a universal property, which we then use to produce a general definition of abstract dynamical system. 

Because time appears both as a system and in maps, we will flow freely between  behavioral and behavior-governing perspectives. In fact, viewing time through its universal property unifies these perspectives. To preview how the construction works, consider an ode in $\R$ defined by $\dot{t} = 1$, whose solutions exhibit a translate of the flow of time with initial condition corresponding to translation of initial time. Bracketing completeness issues, the fact that  odes have solutions expresses that there is a map of odes from the one which defines (the flow of) time. Specification of initial condition, moreover, uniquely determines the map. In this manner, we demarcate  systems representing time as \textit{universal}, though the construction in particular cases may require some care.

We highlight additional benefits of considering systems category theoretically. In particular, the notion of map (alternatively: morphism) plays a particularly central role in a study of dynamical systems. Conveniently, many systems-theoretic properties arise \textit{as} maps of systems, namely maps between state spaces which respect the dynamics. We have already identified one, solutions of odes as a map from time.   But other  properties are also describable as maps, such as equilibria points (in continuous time) or fixed points (in discrete). Recognizing such properties as maps has the added advantage of guaranteeing their preservation under maps of dynamical systems: in a category, morphisms compose, which implies, in particular, that maps preserve solutions and equilibria points.   This observation led to a category-theoretic construction of hybrid system in \cite{lermanhybrid} and was extended in \cite{schmidt2019morphisms} and  \cite{lermanSchmidt1}.

This paper is organized as follows: in \cref{sec:dynamicalSystems}, we review continuous-time and discrete-time dynamical systems, and explain in each case how time appears as a kind of system and is universal. For discrete-time systems (\cref{subsec:timeDiscrete}) and complete continuous-time systems (\cref{subsec:timeCDS}) much of this task amounts to aligning the relevant categorical concepts. The case of arbitrary continuous-time systems (\cref{subsec:timeDS}) is more subtle. That in each case we can realize universality of time provides confidence that the category theory tracks the systems theory. In \cref{sec:abstractSystem}, we then construct an abstract notion of system, of which those in \cref{sec:dynamicalSystems} and others in \cite{schmidt2019morphisms} are instances. 

% Consider that a smooth map between state spaces also induces a map on dynamics, and we say that a map between statespaces which respects the underlying dynamics is a map of dynamical systems. It turns out that this notion is quite natural and encodes numerous systems-theoretic properties, solutions among them. Thus uniqueness of the map singles out time as special \textit{system} because its relation by maps to other systems is special. 
 
\section{Ordinary Dynamical Systems}\label{sec:dynamicalSystems}
We start with review of familiar concepts from the study of continuous-time dynamical systems, and consider their categorical interpretation in \cref{subsec:review}. We then proceed in \cref{subsec:timeCDS} to explaining how $\big(\R,\dot{t}=1\big)$ is a universal complete dynamical system. Most continuous-time systems are not complete, yet the sense in which $(\R,\dt = 1)$ is universal remains, with modification, unperturbed. We work through the requisite modification in \cref{subsec:timeDS}. We conclude in \ref{subsec:timeDiscrete} with an example from \cite{riehl} for discrete-time systems. 
\subsection{Continuous-Time Systems}\label{subsec:review}
The theory of continuous-time systems is a theory of ordinary differential equations. The theory generalizes to manifolds.  
\begin{definition}\label{def:c0-dysys}
We define a continuous-time dynamical system to be a pair $(M,X)$ where $M$ is a smooth manifold and $X\in \Xf(M)$ a smooth vector field on $M$. 
\end{definition}

Recall that a vector field $X\in \Xf(M)$ is a smooth section of the tangent bundle $\begin{tikzcd}
	M\arrow[r,shift left,"X"] & \arrow[l,shift left,"p_M"] TM,
\end{tikzcd}$ satisfying $p_M\circ X = id_M$ for canonical projection $p_M:TM\rightarrow M$.  The section is smooth if the map $X:M\rightarrow TM$ is smooth as a map of manifolds.

At the outset, we take all smooth manifolds to be Hausdorff and second countable. One may consider as a running example a Euclidean state space   $M=\R^n$ and an ordinary differential equation $\dx = X(x)$ defined by the vector field.

We present a notion of map \textit{between} two dynamical systems. To do so, we  relate how  maps  behave on tangent vectors in the source. A map $f:M\rightarrow N$ between manifolds induces a collection of maps, pushforwards, between tangent spaces $\left\{Tf_p:T_pM\rightarrow T_{f(p)}N\right\}_{p\in M}$ defined by $Tf_pv(\eta)\defeq v(\eta\circ f)$ for $v\in T_pM$ and $\eta\in \Cc^\infty(N)$. In general, this collection of maps of tangent vectors does not induce a map of vector fields because we are not guaranteed  $Tf_pv = Tf_{p'}v' \in T_{f(p)}N$ for two arbitrary tangent vectors $v\in T_pM,\, v'$ in $T_{p'}M$ where $f(p) = f(p')$, and therefore that $X(p)=X(p')$ of vector field $X\in \Xf(M)$. Apriori, therefore, there is no reason why the tangent vector defined at $f(x)$ by $Y$ should agree with the pushforward under $Tf$ of $X(x)$. We isolate pairs of vector fields which do cohere with the map: 

\begin{definition}\label{def:relatedVectorFields} Let $f:M\rightarrow N$ be a smooth map of manifolds.  We say that vector fields $\big(X\in \Xf(M),Y\in \Xf(N)\big)$ are $f$-\textit{related} if $Tf\circ X = Y\circ f$.	
\end{definition}

Relatedness is the condition we need for  a notion of map of dynamical systems:

\begin{definition}\label{def:c1-dysys}
	Let $(M,X)$ and $(N,Y)$ be two continuous-time dynamical systems.  A  \textit{map} (or \textit{morphism}) $(M,X)\xrightarrow{f}(N,Y)$ of systems is a smooth map $M\xrightarrow{f}N$ of manifolds such that $(X,Y)$ are $f$-related (\cref{def:relatedVectorFields}). 
\end{definition}

Now we may articulate how common systems phenomena arise as maps of dynamical systems. 
\begin{example}\label{ex:dysys1}
	An equlibrium point $x_{e}\in M$ for system $(M,X)$ is a point whose dynamics are zero: $X(x_e) = 0$. When such a point exists, there is a map from one point system  $\iota:(\star,0)\hookrightarrow (M,X)$ sending $\star\mapsto x_e$. Relatedness of vector fields requires that $X(\iota(\star))=0$, whenever there is such a map.  
\end{example}

\begin{example}\label{ex:dysys2}
	Consider the interval $A=[0,1]/\sim$ with endpoints identified $0\sim 1$ and dynamics given by constant vector field $\frac{d}{dt}$. Then $A\simeq S^1$ and a map $\left(A,\frac{d}{dt}\right)\xrightarrow{\eta}(M,X)$ defines a periodic orbit. 
\end{example}

\begin{example}\label{ex:dysys3}
	Trajectories of a system $(M,X)$ arise as maps  $\gamma:\left(\R,\frac{d}{dt}\right)\hookrightarrow(M,X)$. Relatedness of vector fields translates that $\g$ is a solution of the ode $X(\g(t))=\dot{\gamma}(t)$ (\cref{eq:relatedMapRM}, \cref{def:solutionToDynamicalSystem}).
\end{example}

This last example isolates a special class of dynamical systems. Recall the definition of solution: 
\begin{definition}\label{def:solutionToDynamicalSystem}
Let $(M,X)$ be a continuous-time dynamical system.  A  \textit{solution}, or \textit{integral curve},  $\ph_{X}$ of $(M,X)$ is a map $\ph_{X}:(t_0,t_1)\rightarrow M$, for some $t_0<0<t_1$, such that $\frac{d}{dt}\ph_{X}(t) = X(\ph_{X}(t))$ for all $t\in (t_0,t_1)$. The value $\ph_X(0)=x_0$ at $t=0$ is called the \textit{initial condition}, and we may write $\ph_{X,x_0}$ to indicate  that $\ph_X$ has initial condition $x_0$. 

 We say that $\ph_X$ is \textit{maximal} if its domain may not be extended, i.e.\ if there is no $(t_0',t_1')\supsetneq (t_0,t_1)$ for which $\psi_X:(t_0',t_1')\rightarrow M$ is an integral curve with initial condition $\psi_X(0) = x_0$. When the domain of maximal integral curve $\ph_{X,x_0}$ is $\R$, we say that $\ph_{X,x_0}$ is \textit{complete} and that $(M,X)$ is complete when $\ph_{X,x_0}$ is complete for all $x_0\in M$.  \end{definition}

	 Every dynamical system $(M,X)$ has  solutions.  Moreover, solutions with specified initial condition are  unique. We recall and restate the central Existence and Uniqueness Theorem: 
	 
\begin{theorem}\label{theorem:E&U}
	Let $(M,X)$ be a dynamical system, and $x_0\in M$. Then there are $t_0<0<t_1\in \R$ for which smooth map $\ph_{X,x_0}:(t_0,t_1)\rightarrow M$ is unique maximal solution of $(M,X)$ with initial condition $x_0$.  Thus,  $\ph_{X,x_0}(0) = 0$ and $\frac{d}{dt} \ph_{X,x_0}(t) = X(\ph_{X,x_0}(t))$ for $t\in (t_0,t_1)$.  Moreover, given curve $\g:(t_0',t_1')\rightarrow M$ satisfying $\g(0)= x_0$ and $\frac{d}{dt}\g(t) = X(\g(t))$, then $(t_0',t_1')\subseteq (t_0,t_1)$ and $\g(t) = \ph_X(t)$ for $t\in (t_0',t_1')$. 
\end{theorem}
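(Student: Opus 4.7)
The plan is to establish local existence and uniqueness in a coordinate chart via the classical Picard--Lindel\"of argument, then glue local solutions using uniqueness to obtain a maximal solution defined on a (possibly bounded) open interval about $0$.

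First I would choose a coordinate chart $(U,\psi)$ about $x_0$ so that $X|_U$ is represented by a smooth vector-valued function $\tilde X : \psi(U) \subseteq \R^n \to \R^n$ via the standard identification of tangent vectors in a chart with tuples in $\R^n$. Since $\tilde X$ is smooth, it is locally Lipschitz on a compact neighborhood of $\psi(x_0)$, so the Banach fixed-point theorem applied to the integral operator $T\beta(t) = \psi(x_0) + \int_0^t \tilde X(\beta(s))\,ds$ on a suitable complete space of continuous curves yields a unique local $C^1$ solution $\tilde\beta : (-\delta,\delta) \to \R^n$ on a small enough interval that $\tilde\beta$ stays inside $\psi(U)$. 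Pulling back by $\psi^{-1}$ produces a local smooth solution on $M$. Smoothness of $\tilde\beta$ follows by bootstrapping: the ODE identity $\dot{\tilde\beta}(t) = \tilde X(\tilde\beta(t))$ combined with smoothness of $\tilde X$ shows $\dot{\tilde\beta}$ is as differentiable as $\tilde\beta$, and iterating yields $\tilde\beta \in C^\infty$.

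Next I would upgrade local uniqueness to global uniqueness on any common interval of definition. Suppose $\gamma_1 : I_1 \to M$ and $\gamma_2 : I_2 \to M$ are two solutions with $\gamma_1(0) = \gamma_2(0) = x_0$; the set $S = \{t \in I_1 \cap I_2 : \gamma_1(t) = \gamma_2(t)\}$ is open by local uniqueness applied at each $t \in S$ after a time shift, closed by continuity of the $\gamma_i$ together with Hausdorffness of $M$, and nonempty since it contains $0$. Connectedness of $I_1 \cap I_2$ forces $S = I_1 \cap I_2$. To then construct the maximal solution $\ph_{X,x_0}$, I would take the union
\[
(t_0,t_1) \defeq \bigcup_{\gamma} \mathrm{dom}(\gamma)
\]
over all integral curves $\gamma$ satisfying $\gamma(0) = x_0$, and define $\ph_{X,x_0}(t) \defeq \gamma(t)$ for any such $\gamma$ whose domain contains $t$; the uniqueness step ensures well-definedness, and the resulting curve inherits smoothness and the ODE from each local piece. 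That the domain is an open interval about $0$ is immediate from the union construction, and maximality together with the final assertion that any other solution with initial condition $x_0$ is a restriction of $\ph_{X,x_0}$ follow at once.

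The main obstacle is the local existence and smoothness step, which requires the Picard--Lindel\"of contraction argument together with bootstrapping of regularity from the ODE; the gluing and uniqueness arguments are routine once local uniqueness is in hand. A minor technical subtlety is ensuring that the coordinate solution remains inside $\psi(U)$ on its interval of definition so that pulling back to $M$ is legitimate, which is handled by shrinking $\delta$ via continuity of $\tilde\beta$ at $t=0$.
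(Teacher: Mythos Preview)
Your proposal is correct and outlines the standard argument: local existence and uniqueness via Picard--Lindel\"of in a chart, bootstrapping for smoothness, the open--closed connectedness argument for global uniqueness on overlaps, and the union-of-domains construction for the maximal solution. The paper, however, does not actually prove this theorem; it simply cites a standard reference (Tu, \S 14.3). So there is no real comparison of approaches to make: you have supplied precisely the kind of argument one would find in that reference, whereas the paper treats the result as background and defers the proof entirely.
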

\begin{proof}
	See \cite[\S 14.3]{tu}.
\end{proof}

We now collect the observation in \cref{ex:dysys3} into an equivalent definition of \cref{def:solutionToDynamicalSystem}. 
\begin{definition}\label{def:integralCurveAsMap}
	Let $(M,X)$ be a continuous-time dynamical system.  A \textit{solution}  (or \textit{integral curve}) \textit{of system} $(M,X)$  is a map $\ph_{X,x_0}:\left((t_0,t_1),\frac{d}{dt}\right)\rightarrow (M,X)$ of dynamical systems from the dynamical system $\left((t_0,t_1),\frac{d}{dt}\right)$ with constant vector field $\frac{d}{dt}\in \Xf(\R)$ sending $t\mapsto 1\in T_t\R$. 
\end{definition}

Equivalence of \cref{def:solutionToDynamicalSystem} and \cref{def:integralCurveAsMap} follows from \cref{def:c1-dysys}, since \begin{equation}\label{eq:relatedMapRM} X\circ \ph_{X,x_0} = T\ph_{X,x_0} \left(\frac{d}{dt}\right) = \frac{d}{dt} \ph_{X,x_0}.\end{equation} 

A convenient consequence of \cref{def:integralCurveAsMap} is that maps $(M,X)\xrightarrow{f}(N,Y)$ of dynamical systems preserve integral curves: the composition of maps \begin{equation}\label{eq:mapsCompose}\begin{tikzcd}\left((t_0,t_1),\frac{d}{dt}\right)\arrow[r,"\ph_{X,x0}"]\arrow[rr,bend left] &(M,X)\arrow[r,"f"]&(N,Y)\end{tikzcd}\end{equation} is indeed a map of dynamical systems. Compositionality of maps follows from the chain rule: for $(M,X)\xrightarrow{f}(N,Y)\xrightarrow{g}(P,Z)$, $T(g\circ f) = Tg\circ Tf$. 

With preliminary review complete, we turn to interpreting these notions category theoretically.

\subsection{Complete Time is a Universal Complete Dynamical System}\label{subsec:timeCDS}

	Dynamical systems and maps assemble to define a category of dynamical systems. 
\begin{definition}\label{def:c-dysys}
The category $\sF{DySys}$ of \textit{continuous-time dynamical systems} has dynamical systems as objects (\cref{def:c0-dysys}) and maps of dynamical systems as morphisms (\cref{def:c1-dysys}). 
\end{definition}

Complete dynamical systems (\cref{def:solutionToDynamicalSystem}) and their morphisms also form a category, which we here denote by $\sF{comDySys}$.  It is a full subcategory of the category of dynamical systems (\cref{def:c-dysys}) whose objects may not have complete integral  curves. 

In $\sF{comDySys}$, consider that the collection of solution maps $\big\{\ph_{X,x_0}:\R\rightarrow M\big\}_{x_0\in M}$ also defines map $$\ph_{X,(\cdot)}:M\rightarrow M^\R$$ sending $x_0\mapsto \ph_{X,x_0}$. And the collection of maps $\big\{\ph_{X,(\cdot)}:M\rightarrow M^\R\big\}_{(M,X)\in \sF{comDySys}}$ itself arises as a map from objects in $\sF{comDySys}$ to $\sF{Set}$. It turns out, in fact, that  there are functors  with respect to which the maps $\ph$, ranging over systems $(M,X)$,  assemble  into a \textit{natural transformation}. 

Consider forgetful functor $\upsilon:\sF{comDySys}\rightarrow\sF{Set}$ sending $(M,X)\mapsto \{x\in M\}$ which drops dynamics $X$ and smooth/topological structure of manifold $M$. Next, let $$\sF{comDySys}\left(\left(\R,\frac{d}{dt}\right),\bullet\right):\sF{comDySys}\rightarrow\sF{Set}$$ be the representing functor, represented by $\left(\R,\frac{d}{dt}\right)$,  sending $(M,X)\mapsto \sF{comDySys}\left(\big(\R,\frac{d}{dt}\big),(M,X)\right)$, the collection of integral curve of $(M,X)$.  Then we have commuting diagram 

\begin{equation}\label{eq:naturalityOfExp}
\begin{tikzcd}	
\big\{x\in M\big\}\arrow[rr,"\ph_{X,(\cdot)}"]\arrow[dd,"\upsilon f"] & & \sF{comDySys}\left(\left(\R,\frac{d}{dt}\right),(M,X)\right)\arrow[dd,"f_*"] \\ 
\\ 
\big\{y\in N\}\arrow[rr,"\ph_{Y,(\cdot)}"] & &  \sF{comDySys}\left(\left(\R,\frac{d}{dt}\right),(N,Y)\right).
\end{tikzcd}
\end{equation}
By \eqref{eq:mapsCompose} and \cref{def:integralCurveAsMap}, $f_*\circ \ph_{X,x_0}:\left(\R,\frac{d}{dt}\right)\rightarrow(N,Y)$ is an integral curve with $f_*\big(\ph_{X,x_0}(0)\big) = f(x_0)$ which by \cref{theorem:E&U}, must be $\ph_{Y,f(x_0)}$. In fact, \cref{theorem:E&U} implies more, namely that the natural transformation $\ph:\upsilon\Rightarrow \sF{comDySys}\left(\left(\R,\frac{d}{dt}\right),\cdot\right)$ is a bijection and therefore a natural \textit{isomorphism}.  By Yoneda, then, $\upsilon$ is representable.

In $\sF{comDySys}$, existence and uniqueness can thus be formulated in Yoneda categorical dress: 

\begin{prop}\label{prop:e&uRepresentable} 
	The forgetful functor $\upsilon:\sF{comDySys}\rightarrow\sF{Set}$---sending continuous-time dynamical system $(M,X)\mapsto \{x\in M\}$ to the underlying set---is representable. 
\end{prop}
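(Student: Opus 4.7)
The plan is to verify representability directly by exhibiting a natural isomorphism $\ph: \upsilon \xRightarrow{\sim} \sF{comDySys}\!\left(\big(\R, \tfrac{d}{dt}\big), -\right)$, so that $(\R, \tfrac{d}{dt})$ serves as the representing object. All the ingredients are already assembled in the paragraphs preceding the proposition; the task is to package them into a clean verification.

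First I would fix notation: for each complete system $(M,X)$, define the component $\ph_{(M,X)}: \upsilon(M,X) \to \sF{comDySys}\!\left(\big(\R,\tfrac{d}{dt}\big),(M,X)\right)$ by sending $x_0 \mapsto \ph_{X,x_0}$, where $\ph_{X,x_0}$ is the unique maximal integral curve of $(M,X)$ through $x_0$. Completeness guarantees $\ph_{X,x_0}$ is defined on all of $\R$, so by \cref{def:integralCurveAsMap} it genuinely is a morphism from $(\R,\tfrac{d}{dt})$ in $\sF{comDySys}$; this is where the restriction to $\sF{comDySys}$ (rather than $\sF{DySys}$) is essential.

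Next I would verify that each $\ph_{(M,X)}$ is a bijection and that the collection is natural in $(M,X)$. Bijectivity is just a reformulation of \cref{theorem:E&U}: surjectivity is existence (any element of the hom-set is an integral curve, hence of the form $\ph_{X,x_0}$ for $x_0$ its value at $0$), while injectivity is uniqueness (two integral curves with the same initial condition must coincide). Naturality amounts to the commutativity of diagram \eqref{eq:naturalityOfExp}: for a morphism $f:(M,X)\to(N,Y)$ and $x_0 \in M$, the composite $f \circ \ph_{X,x_0}$ is a morphism $(\R,\tfrac{d}{dt}) \to (N,Y)$ by \eqref{eq:mapsCompose}, hence is an integral curve of $(N,Y)$ with initial value $f(x_0)$, and so by uniqueness in \cref{theorem:E&U} equals $\ph_{Y,f(x_0)}$. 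Equivalently, $f_* \circ \ph_{(M,X)} = \ph_{(N,Y)} \circ \upsilon f$.

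Having exhibited $\ph$ as a natural isomorphism, I conclude that $\upsilon \cong \sF{comDySys}\!\left(\big(\R,\tfrac{d}{dt}\big), -\right)$, so $\upsilon$ is representable with representing object $\big(\R,\tfrac{d}{dt}\big)$. There is no real obstacle here beyond correctly invoking \cref{theorem:E&U}: the content of representability is precisely existence and uniqueness of complete integral curves. The only subtle point worth flagging is the necessity of the completeness hypothesis—without it, hom-sets out of $(\R,\tfrac{d}{dt})$ see only the globally defined trajectories and the correspondence with points of $M$ breaks down; this sets up the modification needed in \cref{subsec:timeDS}.
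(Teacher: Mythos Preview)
Your argument is correct and is essentially the direct verification that the paper carries out in the discussion surrounding diagram~\eqref{eq:naturalityOfExp} just before the proposition. The paper's formal proof, however, packages the same content slightly differently: rather than exhibiting the natural isomorphism $\ph$ componentwise and checking naturality and bijectivity, it invokes the equivalent criterion that a $\sF{Set}$-valued functor is representable if and only if its category of elements has an initial object (\cite[Proposition~2.4.8]{riehl}), and then argues that $\big((\R,\tfrac{d}{dt}),0\big)$ is initial in $\int_{\sF{comDySys}}\upsilon$. Existence in \cref{theorem:E&U} gives a morphism out of this object to any $\big((M,X),x_0\big)$; uniqueness shows it is the only one. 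The two routes encode identical mathematical content---existence and uniqueness of complete integral curves---and differ only in which characterization of representability is foregrounded. Your version has the advantage of being self-contained and making the representing natural isomorphism explicit; the paper's version is terser and sets up the language of universal elements used later in \cref{def:abstractSolution}.
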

\begin{proof}
	We argue that the element $\left(\big(\R,\frac{d}{dt}\big),0\right) \in \disg_{\sF{DySys}}\upsilon$ is initial in the category of elements \cite[Proposition 2.4.8]{riehl}. By assumption, given dynamical system $(M,X)$ and element $x_0\in \upsilon (M)$, there is morphism $f:\big(\R,\frac{d}{dt}\big)\rightarrow (M,X)$ with $f(0) = x_0$ (existence).  In fact, there is only one (uniqueness) (\cref{theorem:E&U}).  This proves that $ \big((\R,\frac{d}{dt}),0\big)$ is initial in the category of elements, and therefore that $\upsilon:\sF{DySys}\rightarrow\sF{Set}$ is representable. 
\end{proof}
We conclude from \cref{prop:e&uRepresentable} that $\left(\R,\frac{d}{dt}\right)$ is a \textit{universal complete dynamical system} with time at zero---which defines initial condition---witnessing the natural isomorphism. There is nothing sacrosanct about zero time, and any other time for cataloging a prespecified state is equally suitable:  $\left(\left(\R,\frac{d}{dt}\right),t_0\right)$ is also initial in $\disg \upsilon$.  Of course, there is unique isomorphism $\left(\left(\R,\frac{d}{dt}\right),0\right) \cong \left(\left(\R,\frac{d}{dt}\right),t_0\right)$ by the translation map $t\mapsto t+t_0$.

\subsection{Local Time is a Universal Continuous-Time Dynamical System}\label{subsec:timeDS}
The Existence and Uniqueness  Theorem  (\cref{theorem:E&U}) does not require  solutions to be complete. If there is some analog of \cref{prop:e&uRepresentable} for possibly non-complete dynamical systems $\sF{DySys}$, we must exhibit finesse in defining the relevant categories and functors. For example, the ode $\dx = x^2$ has maximal solutions with different domains. 

 Noting that our definition of continuous-time dynamical system $(M,X)$ (\cref{def:c0-dysys}) consists of a state space (the manifold $M$) with some dynamics ($X$) on that space, we prepare our setting by isolating components and first augmenting the state space category. We then define dynamics as some section of an appropriate bundle over that state space. In \cref{sec:abstractSystem}, we will  abstract this observation for a general theory of dynamical systems. 

\begin{definition}\label{def:c-manifolds}
We define the category $\sF{Man}$ of smooth manifolds to have \begin{enumerate}
	\item smooth manifolds $M$ as objects, and 
	\item smooth maps $M\xrightarrow{f}N$ between manifolds as morphisms. 
\end{enumerate}
\end{definition}

By singling out the state space category, we remark that \cref{def:c-dysys} remains unchanged: a map of dynamical systems is a map in the category $\sF{Man}$ of manifolds preserving some structure on the dynamics. Now for the augmentation: 

\begin{definition}\label{def:germManifolds}
	We define the category $\sF{gMan}$ of germed Manifolds to have
	\begin{enumerate}
		\item smooth manifolds $M$ as objects, and 
		\item  equivalence classes $(U\subset M)\xrightarrow{[f_U]}N$ of partial (smooth) maps for open $U\subset M$ as morphisms. 	\end{enumerate}
	\end{definition} 
	Two partial maps $(U\subset M)\xrightarrow{f_U}N$ and $(V\subset M)\xrightarrow{f_V}N$ are \textit{equivalent}, denoted $f_U\sim f_V$, if they both contain a common basepoint $x_0\in U\cap V$ and their restrictions are equal $f_U|_{U\cap V} = f_V|_{U\cap V}$. This equivalence defines a \textit{germ} of functions and the basepoint is essential for the relation to be an equivalence.

Composition of partial maps for sets is defined as follows: the composition $g\circ f \subset Z\times X$ in $$\begin{tikzcd}X\arrow[r,"f"]\arrow[rr,bend left,"g\circ f"] & Y \arrow[r,"g"] & Z\end{tikzcd}$$ is the relation $g\circ f \defeq\big\{(z,x)\in  Z\times (dom(f)\subset X):\,z=g(y)\,\mbox{for some}\, y=im(f)\cap dom(g)\big\}$, with $dom(g\circ f) = dom(f) \cap f^{-1}(dom(g))$.  That sets with partial maps form a category requires a straightforward verification that composition is associative. The check for $\sF{gMan}$ is similar.

\begin{example}\label{ex:puncturedR}
Consider, for  example, the punctured real line $\big(\R\setminus\{t'\}\big)$ as an object of $\sF{gMan}$. There are two partial identity maps  $\begin{tikzcd}\R\arrow[r,shift left,"id_r"]\arrow[r,shift right,swap,"id_\ell"] & \big(\R\setminus\{t'\}\big),\end{tikzcd}$ mapping $t\mapsto t,$ with $dom(id_\ell) =(-\infty,t')$ and $dom(id_r)= (t',\infty)$. These two  maps define two distinct partial identity morphisms $[id_{r/\ell}]:\R\rightarrow\R\setminus\{t'\}$ in $\sF{gMan}$. 
\end{example}

%\begin{remark}
%	Because smooth manifolds have maximal atlases, we may interpret $\sF{gMan}$ as the category whose objects are manifolds and whose morphisms are partial maps of compatible germs. We distinguish objects merely to help in highlighting the difference on maps. 
%\end{remark}

We now consider germed dynamical systems $\sF{gDySys}$:
\begin{definition}\label{def:c-gdysys}
	We define the category $\sF{gDySys}$ of \textit{germed (continuous-time) dynamical systems} with the following data: 
	\begin{enumerate}
		\item objects $(M,X)$ are continuous-time dynamical systems (\cref{def:c-dysys}), and
		\item morphisms $(M,X)\xrightarrow{f}(N,Y)$ are maps $M\xrightarrow{[f]}N$ of germed manifolds (\cref{def:germManifolds}) satisfying the property that  $(U,X|_U)\xrightarrow{f_{U}}(N,Y)$ is a map of dynamical systems (\cref{def:c1-dysys}) for  representative $f_{U}$ of $[f]$.
	\end{enumerate}
\end{definition} 

The forgetful functor $\upsilon:\sF{gDySys\rightarrow Set}$ sending $(M,X)\mapsto \big\{x\in M\big\}$ forgets dynamics and manifold structure, and on  morphism $(M,X)\xrightarrow{[f]}(N,Y)$ is defined as $\upsilon (f) \defeq   \disu_{f_U\in [f]}dom(f_U)\xrightarrow{\bigcup f_{U}}N$.  That manifolds have sheaves of regular functions and vector fields   makes it easy to check that this definition is functorial.

The represented functor \begin{equation}\label{eq:repfun-gdysys}
\sF{gDySys}\left(\left(\R,\frac{d}{dt}\right),\bullet\right):\sF{gDySys\rightarrow Set}
\end{equation} takes a germed dynamical system $(M,X)$ to germs of integral curves of $(M,X)$. Integral curves extend to maximal curves (\cref{theorem:E&U}), and specification of initial condition $x_0\in M$ defines unique map $\ph_{X,x_0}\in  \sF{gDySys}\left(\left(\R,\frac{d}{dt}\right),(M,X)\right)$ which takes $0\mapsto  \ph_{X,x_0}(0)=x_0$. We  conclude that the map $\ph:\upsilon\Rightarrow \sF{gDySys}\left(\left(\R,\frac{d}{dt}\right),\bullet \right)$ assembles into a natural transformation

\begin{equation}\label{eq:naturalityOfgExp}
\begin{tikzcd}	
\big\{x\in M\big\}\arrow[rr,"\ph_{X,(\cdot)}"]\arrow[dd,"\upsilon (f)"] & & \sF{gDySys}\left(\left(\R,\frac{d}{dt}\right),(M,X)\right)\arrow[dd,"(f_*)"] \\ 
\\ 
\big\{y\in N\}\arrow[rr,"\ph_{Y,(\cdot)}"] & &  \sF{gDySys}\left(\left(\R,\frac{d}{dt}\right),(N,Y)\right), 
\end{tikzcd}
\end{equation}  and is in fact a natural \textit{isomorphism}  (c.f.\ \eqref{eq:naturalityOfExp}). The reasoning is similar to that of \cref{prop:e&uRepresentable} and proves the followin analog: 
\begin{prop}\label{prop:e&uGRepresentable}
	The forgetful functor $\upsilon:\sF{gDySys\rightarrow Set}$ is representable, witnessed by initial object $\left(\left(\R,\frac{d}{dt}\right),0\right)\in \disg_{\sF{gDySys}}\upsilon$ in the category of elements.
\end{prop}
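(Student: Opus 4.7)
The plan is to mirror the proof of \cref{prop:e&uRepresentable}: exhibit $\left(\left(\R,\frac{d}{dt}\right),0\right)$ as an initial object of the category of elements $\disg_{\sF{gDySys}}\upsilon$ and then invoke \cite[Proposition 2.4.8]{riehl} to conclude that $\upsilon$ is representable. The key difference relative to the complete case is that maximal integral curves have differing domains of definition, so we must pass through germs (\cref{def:germManifolds}) in order for a single universal domain $\R$ to accommodate all initial conditions.

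For existence, given $((M,X),x_0)\in \disg \upsilon$, I would produce a morphism out of $\left(\left(\R,\frac{d}{dt}\right),0\right)$ as follows. \cref{theorem:E&U} provides a unique maximal integral curve $\ph_{X,x_0}:(t_0,t_1)\rightarrow M$ with $\ph_{X,x_0}(0) = x_0$. The ODE condition $\frac{d}{dt}\ph_{X,x_0}(t) = X(\ph_{X,x_0}(t))$ is precisely the relatedness condition of \cref{eq:relatedMapRM}, so the partial smooth map $\ph_{X,x_0}$ with domain $(t_0,t_1)\subset \R$ represents a germ $[\ph_{X,x_0}]:\R\rightarrow M$ in $\sF{gMan}$, and by \cref{def:c-gdysys} this germ is a morphism $\left(\R,\frac{d}{dt}\right)\rightarrow (M,X)$ in $\sF{gDySys}$. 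It sends $0$ to $x_0$ on underlying sets, so it defines the required morphism in $\disg\upsilon$.

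For uniqueness, suppose $[g]:\left(\R,\frac{d}{dt}\right)\rightarrow(M,X)$ is any morphism in $\sF{gDySys}$ whose underlying germ sends $0\mapsto x_0$. Choosing a representative $g_U:U\rightarrow M$ with $0\in U$ open, the morphism condition of \cref{def:c-gdysys} makes $g_U$ a map of dynamical systems $\left(U,\frac{d}{dt}\big|_U\right)\rightarrow (M,X)$, hence an integral curve with $g_U(0)=x_0$. The uniqueness clause of \cref{theorem:E&U} then forces $g_U$ to agree with $\ph_{X,x_0}$ on $U\cap(t_0,t_1)$, which is an open neighborhood of $0$; therefore $[g] = [\ph_{X,x_0}]$ as germs.

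The main subtlety—and the only place where the care promised at the end of \cref{subsec:timeCDS} is needed—is that the morphism side of $\sF{gDySys}$ must be set up so that germ-level equality correctly encodes initiality despite the fact that solutions are not globally defined. Once \cref{def:c-gdysys} is read this way, the rest of the argument is a routine translation of \cref{theorem:E&U} into Yoneda language, exactly parallel to \cref{prop:e&uRepresentable}.
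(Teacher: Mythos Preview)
Your proposal is correct and follows essentially the same approach as the paper: the paper does not give a standalone proof but simply states that the reasoning is similar to that of \cref{prop:e&uRepresentable}, i.e.\ verify initiality of $\big((\R,\frac{d}{dt}),0\big)$ in $\int_{\sF{gDySys}}\upsilon$ via \cref{theorem:E&U} and invoke \cite[Proposition~2.4.8]{riehl}. You have spelled out in more detail than the paper how the passage to germs absorbs the varying domains of maximal integral curves, which is exactly the point the paper leaves implicit.
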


\begin{example}\label{ex:puncturedRDS}
	 We consider the punctured line (\cref{ex:puncturedR}) as a  continuous-time dynamical system $(M,X)=\left(\R\setminus\{0\},\frac{d}{dt}\right)$. Choice of initial condition $t_0> 0$ (the case where $t_0<0$ is similar) defines unique maximal solution $(-t_0,\infty)\rightarrow (0,\infty)$ with $\ph_{X,t_0}(0)=t_0$. 
\end{example}

\subsection{Discrete Time is a Universal Discrete-Time Dynamical System}\label{subsec:timeDiscrete}
We include an example straight from \cite[Examples 2.1.1, 2.4.11]{riehl} for expositional completeness. At the present moment, we will be cursory on formalism and consider the matter with finer detail in \cref{ex:iteratedMapasAbstractSolution}.  

 A discrete-time dynamical system is an endomap $X:\sF{c\rightarrow c}$ of  set $\sF{c\in Set}$, which we denote as pair $(\sF{c},X)$; typically there is  specification of basepoint $c_0\in \sF{c}$ as well, which we will recover momentarily. A map $\a:(\sF{c},X)\rightarrow(\sF{c'},Y)$ of discrete-time systems is a map $\a:\sF{c\rightarrow c'}$ of sets  which respects discrete-time dynamics: $Y\circ \a = \a \circ X$. We denote the category by $\sF{dtDySys}$. Discrete time itself arises as a discrete-time system $(\N,s)$ with $s:\N\rightarrow \N$ mapping $n\mapsto n+1$. When basepoint is specified, the map should respect basepoint.   Inclusion of basepoint  similarly arises by representability of the appropriate forgetful functor $\upsilon:\sF{dtDySys\rightarrow Set}$ sending $(\sF{c},X)\mapsto \{c\in \sF{c}\}$. In this setting, initial object $\big((\N,s),0\big)\in \disg_{\sF{dtDySys}}\upsilon$ in the category of elements witnesses its representability, with universal dynamical system $(\N,s)$ and universal element $0\in \N$. In other words, for $\big((\fc,X),c_0\in \fc\big)$, there is unique map of discrete-time systems $\big((\Nb,s),0\big)\dashrightarrow\big((\fc,X),c_0\big)$, and $c_0$ is the basepoint of system $(\fc,X)$.

\section{Abstract Systems}\label{sec:abstractSystem}
%\subsection{Fibered Transformations}\label{subsection:fiberedCategories}\label{subsection:abstractSections}\label{subsec:FiberedTransform}

 We now consider an abstract formalism of dynamical systems which unifies the examples in \cref{subsec:timeCDS}, \cref{subsec:timeDS}, and \cref{subsec:timeDiscrete}. Our motivation is to identify a notion which both captures the relevant concept of time as well as provide guidance for how to rigorously construct other classes of dynamical systems.

\begin{definition}\label{def:CfiberedInDandSections}\label{def:fiberedTransformationInD}
Let $\fC, \fD$ be locally small concrete categories,  $\Tc,\Uc:\fC\rightarrow\fD$   functors, and natural transformation  $\nat{\fC}{\fD}{\Tc}{\Uc}{\t}$. We say that $\t$ is $\fD$-\textit{fibered} (or simply \textit{fibered}) if  $\Tc\fc\xrightarrow{\t_\fc}\Uc \fc$ is a split epimorphism for each object $\fc\in \fC_0$. 
For a $\fD$-fibered transformation, we define  $\t$-\textit{sections} by the set of right inverses of $\t_\fc$: 
 \begin{equation}\label{eq:abstractSections} \G_\t(\fc)\defeq \left\{\big(\Uc\fc\xrightarrow{X} \Tc\fc\big)\in \fD_1:\, \t_{\fc}\circ X =id_{\Uc\fc}\right\}.\end{equation}
	 \end{definition}
	 
	 \begin{remark}\label{remark:whySplit?}
	 	Since $\t_\fc:\Tc\fc\rightarrow\Uc\fc$ is split epi, $\t$-sections $\G_\t(\fc)$ are guaranteed to be nonempty. 
	 \end{remark}
	
This definition provides a way to package ingredients used in various notions of systems.   

\begin{example}\label{ex:identityFunctorForMan}
Let $\fC=\fD=\sF{Man}$, $\Uc = id_{\sF{Man}}$, and $\Tc:\sF{Man}\rightarrow\sF{Man}$ assign the tangent bundle $TM$ to each manifold $M$.  This assignment is functorial (chain rule). Moreover, the canonical projection of the tangent bundle $\t_M:TM\rightarrow M$ assembles into a natural transformation.  Finally, the  projection $\t_M:TM\rightarrow M$ is a split epimorphism, whose sections $\G_\t(M)=\Xf(M)$ are smooth vector fields in $M$.  Therefore the natural transformation $\t$ is $\sF{Man}$-fibered, or in this case, simply fibered. 
\end{example}

\begin{example}\label{ex:identityFunctorForGermedMan}
Let $\fC=\fD=\sF{gMan}$, $\Uc = id_{\sF{gMan}}$, and $\Tc:\sF{gMan}\rightarrow\sF{gMan}$ assign the germed tangent bundle $TM$ to germed manifold $M$.  This example is nearly identical to \cref{ex:identityFunctorForMan}. 
\end{example}

\begin{example}\label{ex:discSysAsprojCsysInC}
We consider the case of discrete time systems from \cref{subsec:timeDiscrete}. 	Let $\fC$ be a concrete category with products,  $\fD = \fC$, and let  $\Tc = (\cdot)\times (\cdot),$  $\Uc = id_\fC$.\footnote{Formally, $\Tc = (\cdot)\times (\cdot)$ is the composition of functors $\fC\xrightarrow{\Delta}\fC\times \fC\xrightarrow{\times}\fC$ sending $\fc\mapsto (\fc,\fc)\mapsto \fc\times \fc$.}  For object $\fc \in \fC$, we have projection $\fc\times\fc \xrightarrow{p_i}\fc$ onto the $i$th factor and set $\t = p_1$ to be the projection onto the first factor.  A section $X\in \G_\t(\fc)$ is a map $\fc\xrightarrow{X}\fc\times\fc$  for which $X(x)=(X_1(x),X_2(x)) = (x,X_2(x))$ acts as identity on the first factor. \end{example}

In \cref{ex:identityFunctorForMan}, \cref{ex:identityFunctorForGermedMan},  and \cref{ex:discSysAsprojCsysInC} the role of $\fC$ appears superfluous. The formalism may be used to define ``dynamics in $\fD$'' for kinds of ``systems in $\fC$.'' 

\begin{example}
	Hybrid systems, ubiquitous in engineering, are studied in \cite{goebelhybrd}, \cite{ames}, \cite{savkin}. The categorical formulations  in \cite{schmidt2019morphisms} and \cite{lermanSchmidt1} align well with the formalism presented here. A hybrid phase space $a:\Sb^a\rightarrow\sF{RelMan}$ is a functor from discrete double category to double category $\sF{RelMan}$ (whose morphisms are maps of manifolds and relations). Hybrid phase spaces form a category $\sF{HyPh}$ (\cite[Lemma 3.2.6]{schmidt2019morphisms}) and there is  forgetful functor $\Ub:\sF{HyPh\rightarrow Man}$ taking $a\mapsto \discats_{s\in \Sb^a_0}a(s)$.  There is also tangent functor $\Tc\defeq T\circ \Ub$ for tangent endofunctor $T:\sF{Man\rightarrow Man}$, and the canonical projection $\t:\Tc\Rightarrow \Ub$ defines a $\sF{Man}$-fibered natural transformation, which is split epi.
\end{example}

The formalism of fibered transformation in \cref{def:CfiberedInDandSections} organizes ingredients we need for a construction of abstract system. With these pieces in place, we may now define:

\begin{definition}\label{def:DSystemsFromC}\label{def:tauSystem}\label{def:abstractSystems}
	Let $\Tc,\Uc:\fC\rightarrow \fD$ be functors, and $\nat{\fC}{\fD}{\Tc}{\Uc}{\t}$ a $\fD$-fibered transformation (\cref{def:fiberedTransformationInD}). We define $\t$-\textit{system} $(\fc,X)$ to be a pair  where $\fc\in \fC_0$ is an object in $\fC$ and $X\in \G_\t(\fc)$ is a  $\t$-section. We also define   \textit{morphism $(\fc,X)\xrightarrow{f}(\fd,Y)$ of $\t$-systems} to be a morphism $\fc\xrightarrow{f}\fd$ in $\fC$ such that $(X,Y)$ are $f$-related, i.e.\ $\Tc f \circ X = Y\circ \Uc f$.  The collection of $\t$-systems and morphisms make up a category, which we denote by $\t$-$\sF{Sys}$.
\end{definition}

\begin{example}\label{ex:dySysAsManSysInMan}
	Let $\fC=\fD=\sF{Man}$, $\Uc = id_{\sF{Man}}$, $\Tc=T$ the tangent functor (\cref{ex:identityFunctorForMan}), and $\t:\Tc\Rightarrow\Uc$ the canonical projection of the tangent bundle, mapping $\big(v_p\in T_pM\big)\mapsto \big(p\in M\big)$.  Then a $\t$-system $(M,X)$  is a continuous-time dynamical system (\cref{subsec:timeCDS}). 
\end{example}
   
\begin{example}\label{ex:gDySysAsAbstractSystem}
Similarly for \cref{ex:identityFunctorForGermedMan}, let $\t:\Tc\Rightarrow\Uc$ be the canonical projection of germed trangent bundle. A $\t$-system $(M,X)$ is then a germed (continuous-time) dynamical system (\cref{subsec:timeDS}). 
\end{example}

	\begin{example}\label{ex:dtDySysAsAbstractSystem}
		For $\t:\Tc\Rightarrow\Uc$ as in \cref{ex:discSysAsprojCsysInC}, a $\t$-system $(\fc,X)$ is a discrete-time dynamical system (\cref{subsec:timeDiscrete}). 
	\end{example}

Still missing from the theory of systems in our abstraction is a notion of solution, which we saw in various forms in \cref{sec:dynamicalSystems}.  Taking a hint from \cref{prop:e&uRepresentable} and \cref{prop:e&uGRepresentable},  we \textit{define} a solution for abstract systems as a morphism from an initial object in the relevant category of elements.

\begin{definition}\label{def:abstractSolution}
	Let $(\fc,X)\in \t\text{-}\sF{Sys}$ be an abstract $\t$-system, and let $\upsilon:\sF{C\rightarrow Set}$ be the forgetful functor.\footnote{We assume in \cref{def:tauSystem} that $\sF{C}$ is concrete and therefore a subcategory of $\sF{Set}$.} 	We say that $\t$-systems \textit{have solutions} when there is an initial object $((\sF{t},T),t_0)\in\disg_{\t\text{-}\sF{Sys}} \upsilon$ in the category of elements and that a map $(\sF{t},T)\rightarrow(\fc,X)$ is a \textit{solution of} $(\fc,x)$. 
The system $(\sF{t},T)$ is a \textit{universal} $\t$-system, which we also call $\t$-time,  and $t_0\in \upsilon(\sF{t})$ is a \textit{universal element} witnessing universality of $\t$-time. 
\end{definition}

 When $\upsilon:\sF{C\rightarrow Set}$ is representable in $\t$-$\sF{Sys}$, solutions exist.  One may take representability to be a constitutive criterion of abstract $\t$-systems. We identify a few examples.

\begin{example}\label{ex:iteratedMapasAbstractSolution} 
We continue example \cref{ex:dtDySysAsAbstractSystem} translating example  \cite[2.4.11]{riehl}.
 Let $X:\fc \rightarrow \fc $ be a discrete dynamical system (\cref{ex:discSysAsprojCsysInC}) and suppose that the natural numbers $\N\in \fC$.  Consider successor map $\s:\N\rightarrow\N$ defined by $\s(n)\defeq n+1$. This map defines a discrete dynamical system, and a map $(\N,\s)\xrightarrow{\a}(\fc,X)$ of discrete-time systems satisfies \begin{equation}\label{eq:discRelated} \a\circ\s = X\circ \a.\end{equation}  Choosing initial point $c_0$ as the image of $0$ under $\a$, we have entirely determined the map $\a$: for $\a$-relatedness in \eqref{eq:discRelated} implies that $\a(1) = a(\s(1)) = X(c_0)$ and in general, $\a(n) = \underbrace{X\circ \cdots \circ X}_{\text{n-times}}(c_0).$ 
 Thus the map $$\left(\big(\N,\s\big),0\right)\xrightarrow{\a}\left(\big(\fc,X\big),c_0\right)$$  in $\disg_{\sF{dSys}}\upsilon$ defines a solution of $(\fc,X)$ in the sense of \cref{def:abstractSolution}.
\end{example}

\begin{example}\label{ex:intCurveasAbstractSolution}
	In \cref{ex:gDySysAsAbstractSystem}, we identified germed dynamical systems as instance of abstract $\t$-system and in \cref{prop:e&uGRepresentable} saw that $\upsilon:\sF{gDySy\rightarrow Set}$ is representable, and therefore has solutions in the sense of \cref{def:abstractSolution}. 
\end{example}

Examples witnessing the generalization of systems from \cref{def:abstractSystems} appear in \cite{schmidt2019morphisms}. Conditions guaranteeing representability of functor $\upsilon$ for arbitrary $\t$-systems may support development of a general theory of systems with this formalism guiding concrete constructions. Of course, 
uniqueness of solutions in \cref{def:abstractSolution} assumes determinism. Therefore, time in non-deterministic systems may not exhibit the same universal property. This theory seems immediately suitable for deterministic systems, for which  uniqueness of behavior is key.

\bibliographystyle{siam}
\bibliography{phdref}

\end{document}